\newcommand{\reals}{\mathbb R}
\newcommand{\pr}{\mathrm{Pr}}
\DeclareMathOperator{\sgn}{sgn}
\newtheorem*{theorem*}{Theorem}
\begin{document}



\section{Introduction}

Let $\vec X = (X_1, \dots, X_N)$ be a point sampled uniformly at random from the unit sphere $S^{N-1}\subset \reals^N$.  The classical Maxwell-Poincar\'e-Borel lemma asserts that for fixed positive integer $k$, the marginal distribution of $(\sqrt{N} X_1, \dots, \sqrt{N} X_k)$ converges to a $k$-dimensional standard Gaussian distribution as $N \to \infty$. Quantitative versions of this statement were established by Stam \cite{Stam1982} and Diaconis and Freedman \cite{DF1987}, who also traced the history of such results. A generalization to $\ell^p$ spheres appears in Rachev and R\"uschendorf \cite{RR1991}.

Going beyond marginals of subsets of coordinates, Diaconis and Freedman \cite{DF1987} noted that the empirical distribution of all $N$ components of $\sqrt{N} \vec X$ also converges to the standard Gaussian. More recently, Ben Arous, Dembo, and Guionnet \cite[Theorem 6.6]{ADG2001} showed this empirical distribution satisfies a large deviation principle at speed $N$, with an $\ell^p$ generalization by Kim and Ramanan \cite{KR2018}.

We complement these asymptotic results with Theorem \ref{th:main}, an explicit, non-asymptotic concentration inequality for the empirical CDF of $\sqrt{N} \vec X$, bounding Kolmogorov-Smirnov deviations exceeding $\epsilon + \gamma(t)$ with probability at most $2e^{-2N\epsilon^2} + e^{-Ng_+(t)^2} + e^{-Ng_-(t)^2}$, where $\gamma$ and $g_\pm$ are explicit functions. A simpler version with explicit constants is provided in Corollary \ref{cor:simpconc}.  

In Section \ref{sec:main}, we state our main results, and in Section \ref{sec:sketch} we sketch the ideas behind their proofs.  This is followed by key lemmas in Section \ref{sec:lemmas}, proofs of the main results in Section \ref{sec:mainproofs}, and an Appendix detailing longer computations used in the proof of one lemma.

\section{Main Results}\label{sec:main}

The empirical CDF of a random vector $\vec V = (V_1, \dots, V_N)$ whose components $V_i$ are (not necessarily independent) real-valued random variables is defined as
\begin{align}\label{ecdfdef}
    \widehat F_{\vec V}(x) = \frac{1}{N}\sum_{i=1}^N \mathbb{1}_{\{V_i \leq x\}}
\end{align}  
where $\mathbb 1_A$ is the indicator function on a set $A$.  The Kolmogorov-Smirnov (KS) distance between CDFs $F$ and $G$ is defined as
\begin{align}
	d_\mathrm{KS}(F, G) = \sup_{x\in\reals}|F(x) - G(x)|
\end{align}
The following theorem is our main result.

\begin{theorem}[Empirical CDF Concentration]\label{th:main}
	Let $\vec X = (X_1, \dots, X_N)$ be a vector chosen uniformly at random from the unit Euclidean sphere $S^{N-1}$, and let $\Phi$ be the standard Gaussian CDF.  Let $\gamma$ be defined as in Definition \ref{def:deform} below, and let $g_\pm$ be defined as in equation \eqref{gpm} below.  Then for all $\epsilon > 0$ and $t\in[0,1)$,
	\begin{align}\label{mineq}
		\pr\left(d_\mathrm{KS}\left(\widehat  F_{\sqrt{N}\vec X}, \Phi\right) > \epsilon + \gamma(t)\right) \leq 2e^{-2N\epsilon^2} + e^{-Ng_+(t)^2} + e^{-Ng_-(t)^2}
	\end{align}
\end{theorem}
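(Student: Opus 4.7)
The natural starting point is the Gaussian representation of the uniform distribution on the sphere: write $\vec X \stackrel{d}{=} \vec G/\|\vec G\|$, where $\vec G = (G_1,\dots,G_N)$ has i.i.d.\ standard Gaussian entries, and set $R = \|\vec G\|/\sqrt N$, so that $\sqrt N X_i = G_i/R$ almost surely. Then, since scaling by the positive scalar $R$ is a monotone transformation, the empirical CDFs relate by
\begin{align*}
    \widehat F_{\sqrt N\vec X}(x) \;=\; \frac{1}{N}\sum_{i=1}^N \mathbb{1}_{\{G_i \leq Rx\}} \;=\; \widehat F_{\vec G}(Rx).
\end{align*}
Inserting $\Phi(Rx)$ as a pivot and applying the triangle inequality pointwise, then taking the supremum in $x$, gives
\begin{align*}
    d_{\mathrm{KS}}\bigl(\widehat F_{\sqrt N\vec X},\,\Phi\bigr) \;\leq\; d_{\mathrm{KS}}\bigl(\widehat F_{\vec G},\,\Phi\bigr) \;+\; \sup_{x\in\reals}\bigl|\Phi(Rx)-\Phi(x)\bigr|.
\end{align*}

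The first term on the right is the KS deviation of an i.i.d.\ Gaussian empirical CDF from its true CDF, so by the Dvoretzky--Kiefer--Wolfowitz inequality with Massart's sharp constant, it exceeds $\epsilon$ with probability at most $2e^{-2N\epsilon^2}$. The second term depends only on $R$, and we would bound it by introducing a good event on which $R^2$ lies in an interval around $1$ parameterized by $t$. The definition of $\gamma(t)$ (Definition \ref{def:deform}) is presumably chosen so that, on this good event, $\sup_x|\Phi(Rx)-\Phi(x)| \leq \gamma(t)$; geometrically, $\gamma(t)$ is the worst-case deformation of $\Phi$ under rescaling of its argument by an $R$ whose square lies in the prescribed interval.

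To control the bad event for $R$, we would use Chernoff bounds for the chi-squared variable $NR^2 = \|\vec G\|^2$. Since $R^2$ may lie either above or below $1$, we get two separate tail bounds, and $g_+(t)$ and $g_-(t)$ in \eqref{gpm} should be exactly the Cram\'er rates (or transparent surrogates thereof) for the upper and lower tails of $R^2-1$ at the deviation level implicitly encoded by $\gamma(t)$. A union bound over the DKW event and the two chi-squared tail events then yields \eqref{mineq}.

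The main obstacle is not the probabilistic ingredients (DKW and chi-squared Chernoff are standard) but rather the careful design of the trio $(\gamma, g_+, g_-)$ so that the implication ``$R^2$ lies in the good interval $\Rightarrow \sup_x|\Phi(Rx)-\Phi(x)| \leq \gamma(t)$'' is sharp and the rates $g_\pm(t)^2$ are explicit and clean. This requires analyzing the function $R \mapsto \sup_x|\Phi(Rx)-\Phi(x)|$: by differentiating $\Phi(Rx)-\Phi(x)$ in $x$ one finds the maximizing $x$ in closed form (in terms of $R$), yielding an explicit monotone formula for the deformation as a function of $R$, which can then be inverted and paired with the chi-squared tail bounds to produce matched exponents $g_\pm(t)^2$. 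The remaining work to convert the plan into a proof is essentially bookkeeping around these explicit formulas and the union bound.
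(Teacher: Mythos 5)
Your plan is correct and follows essentially the same route as the paper: the Gaussian normalization $\sqrt N \vec X = \vec G/R$, the triangle-inequality (``tube inflation'') decomposition into an i.i.d.\ KS term handled by DKW--Massart and a deterministic deformation term bounded by $\gamma(t)$ on a good event for the radius, and chi-squared (Laurent--Massart) tails giving the exponents $g_\pm(t)^2$, all combined by a union bound. The steps you defer as ``bookkeeping'' are exactly the content of the paper's Lemmas \ref{lem:gamalt}--\ref{lem:lamcon}, so nothing in your outline would fail.
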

\begin{proof}
	See Section \ref{sec:mainproofs}.
\end{proof}

The meaning of our main inequality \eqref{mineq} is a bit opaque since it depends on the not-so-simple functions $\gamma$ and $g_\pm$, but as we show in Lemma \ref{lem:funcbounds},  these functions satisfy simple bounds that yield  the following weaker but more transparent inequality.

\begin{corollary}[Simplified Concentration]\label{cor:simpconc}
		For all $\epsilon > 0$ and $t \in [0, 1)$,
		\begin{align}\label{explicitbound}
		\pr\left(d_\mathrm{KS}\left(\widehat  F_{\sqrt{N}\vec X}, \Phi\right)> \epsilon + \frac{t}{2}\right) \leq 2e^{-2N\epsilon^2} + e^{-\frac{9}{64}Nt^2} + e^{-Nt^2}
	\end{align}
\end{corollary}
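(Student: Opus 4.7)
The plan is to derive Corollary \ref{cor:simpconc} directly from Theorem \ref{th:main} by substituting in the explicit linear estimates promised by Lemma \ref{lem:funcbounds}. Specifically, I would appeal to Lemma \ref{lem:funcbounds} to extract bounds of the form $\gamma(t) \leq t/2$ together with lower bounds $g_+(t)^2 \geq \tfrac{9}{64} t^2$ and $g_-(t)^2 \geq t^2$ (or with the roles of $\pm$ swapped, depending on the paper's labeling convention), each valid for all $t\in[0,1)$.

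Given these ingredients, the argument is a two-step chain of inequalities. First, since $\gamma(t) \leq t/2$, the event $\{d_\mathrm{KS}(\widehat F_{\sqrt N \vec X}, \Phi) > \epsilon + t/2\}$ is contained in $\{d_\mathrm{KS}(\widehat F_{\sqrt N \vec X}, \Phi) > \epsilon + \gamma(t)\}$, so by monotonicity of probability,
\[
\pr\!\left(d_\mathrm{KS}(\widehat F_{\sqrt N \vec X}, \Phi) > \epsilon + \tfrac{t}{2}\right) \;\leq\; \pr\!\left(d_\mathrm{KS}(\widehat F_{\sqrt N \vec X}, \Phi) > \epsilon + \gamma(t)\right).
\]
Second, Theorem \ref{th:main} bounds the right-hand side by $2e^{-2N\epsilon^2} + e^{-N g_+(t)^2} + e^{-N g_-(t)^2}$, and the lower bounds on $g_\pm(t)^2$ replace the last two exponents by $-\tfrac{9}{64}Nt^2$ and $-Nt^2$ respectively, yielding \eqref{explicitbound}.

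Thus Corollary \ref{cor:simpconc} is essentially a routine consequence of Theorem \ref{th:main}; all of the substance is packed into Lemma \ref{lem:funcbounds}. The main obstacle therefore lies not in the corollary itself but in proving that lemma---in particular, in showing that the nonlinear functions $\gamma$ and $g_\pm$ (constructed via Definition \ref{def:deform} and equation \eqref{gpm}, which involve inverses of certain monotone quantities) admit the clean linear estimates above on $[0,1)$. I expect this to be handled via monotonicity or convexity/concavity comparisons with the explicit linear reference functions $t/2$, $3t/8$, and $t$, and then verified at the endpoint $t=0$ and the limiting behavior as $t\uparrow 1$.
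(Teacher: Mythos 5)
Your proposal matches the paper's proof of Corollary \ref{cor:simpconc} essentially verbatim: the event inclusion from $\gamma(t)\leq t/2$, then Theorem \ref{th:main}, then the lower bounds $g_+(t)\geq (3/8)t$ and $g_-(t)\geq t$ (squared, which is valid since both are nonnegative) to weaken the exponents, with all the real work residing in Lemma \ref{lem:funcbounds} exactly as you say. No gaps; this is the same argument.
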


\begin{proof}
	See Section \ref{sec:mainproofs}. 
\end{proof}

\section{Proof Sketch}\label{sec:sketch}

To prove Theorem \ref{th:main}, we use the Gaussian normalization trick which allows us to study the empirical distribution of a rescaled Gaussian random vector $\lambda \vec Z$ instead of the empirical distribution of $\sqrt N \vec X$.  Let $\vec Z = (Z_1, \dots, Z_N)$ be a random vector whose components $Z_i$ are standard Gaussian random variables, and let $|\vec Z| = (Z_1^2 + \cdots + Z_N^2)^{1/2}$ be its Euclidean norm.  Then $\vec X = \vec Z/|\vec Z|$ is uniformly distributed on the unit Euclidean sphere.  Defining $\lambda = \sqrt{N}/|\vec Z|$ gives
\begin{align}\label{coreq}
	\sqrt N \vec X = \lambda \vec Z
\end{align}
Since $\lambda$ concentrates around $1$ as $N\to\infty$, the empirical distribution of $\lambda \vec Z$ approximates that of $\vec Z$.  Moreover, the Dvoretzky-Kiefer-Wolfowitz (DKW) inequality \cite{DKW1956} sharpened by Massart \cite{Massart1990} implies that since the components of $\vec Z$ are i.i.d., the empirical CDF $\widehat F_{\vec Z}$ concentrates within an $\epsilon$ tube centered at the standard Gaussian CDF $\Phi$.
\begin{theorem*}[Dvoretzky-Kiefer-Wolfowitz, 1956]\label{th:DKW}
	Let $\vec V = (V_1, \dots, V_N)$ be a real-valued random vector with i.i.d. components $V_i$ having common CDF $F$, then for all $\epsilon > 0$
	\begin{align}\label{eq:DKW}
    \pr\Big(d_\mathrm{KS}(\widehat F_{\vec V}, F)>\epsilon\Big) \leq 2e^{-2N\epsilon^2}
	\end{align}
\end{theorem*}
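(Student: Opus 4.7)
To prove this sharp form of the DKW inequality, I would follow the classical three-step route introduced by Dvoretzky--Kiefer--Wolfowitz and refined by Massart: reduction to the uniform-on-$[0,1]$ case, symmetrization to a one-sided bound, and a combinatorial-Chernoff estimate for the one-sided supremum.

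\textbf{Reduction to the uniform case.} By the probability integral transform, I would first reduce to the case where $F(x) = x\mathbb{1}_{[0,1]}(x) + \mathbb{1}_{(1,\infty)}(x)$. Setting $U_i = F(V_i)$ when $F$ is continuous (and inserting an independent uniform randomization to break ties if $F$ has atoms) yields i.i.d.\ Uniform$[0,1]$ random variables, and the monotonicity of $F$ gives $d_\mathrm{KS}(\widehat F_{\vec V}, F) = d_\mathrm{KS}(\widehat F_{\vec U}, G)$ where $G$ is the uniform CDF. Hence it suffices to bound $\pr(\sup_{x\in[0,1]}|\widehat F_{\vec U}(x) - x| > \epsilon)$.

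\textbf{Symmetrization.} Next I would split the two-sided deviation by a union bound:
\begin{align*}
\pr\Big(\sup_x\big|\widehat F_{\vec U}(x) - x\big| > \epsilon\Big) \leq \pr\Big(\sup_x\big(\widehat F_{\vec U}(x) - x\big) > \epsilon\Big) + \pr\Big(\sup_x\big(x - \widehat F_{\vec U}(x)\big) > \epsilon\Big).
\end{align*}
The map $U_i \mapsto 1 - U_i$ preserves the uniform distribution on $[0,1]$ and interchanges the two events on the right, so the problem reduces to establishing the one-sided bound $\pr(\sup_x(\widehat F_{\vec U}(x) - x) > \epsilon) \leq e^{-2N\epsilon^2}$.

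\textbf{The one-sided bound.} Writing the supremum in terms of the order statistics $U_{(1)}\leq\cdots\leq U_{(N)}$ gives $\sup_{x\in[0,1]}(\widehat F_{\vec U}(x) - x) = \max_{1 \leq k \leq N}(k/N - U_{(k)})$. The tail of this maximum admits an exact closed form via the Birnbaum--Tingey identity, which expresses the probability as an explicit finite sum of binomial-polynomial terms indexed by $k$. Massart's strategy, which I would adopt, is to bound this sum by a carefully tuned Chernoff-type estimate with an optimally chosen exponential parameter. An equivalent route, bypassing Birnbaum--Tingey, builds an exponential supermartingale adapted to the filtration generated by the sorted sample and applies Doob's maximal inequality.

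\textbf{Main obstacle.} The main difficulty is achieving the sharp constant $2$ in the exponent. A naive union bound over the $N$ jump locations combined with Hoeffding's inequality produces the correct exponent $2N\epsilon^2$ at each individual point but loses at least a polynomial-in-$N$ factor when the supremum is taken. Massart's sharpness instead comes from exploiting the explicit combinatorial structure of the uniform empirical process to control the whole supremum in a single tight estimate, eliminating any union-bound penalty.
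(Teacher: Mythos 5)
First, note that the paper does not actually prove this statement: it is quoted as a classical theorem, with the two-sided form and the sharp constant attributed to Dvoretzky--Kiefer--Wolfowitz \cite{DKW1956} as sharpened by Massart \cite{Massart1990}, and it is then used as a black box in the proof of Theorem \ref{th:main}. So there is no in-paper argument to compare against; your proposal is an attempt to reprove the cited result from scratch.

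As such an attempt, your outline identifies the correct classical architecture (probability integral transform, splitting into the two one-sided statistics, reflection symmetry $U_i\mapsto 1-U_i$, reduction to the order statistics $U_{(k)}$), but it has a genuine gap exactly where the theorem's content lies: the one-sided bound $\pr\big(\sup_x(\widehat F_{\vec U}(x)-x)>\epsilon\big)\le e^{-2N\epsilon^2}$ is asserted by deferring to ``Massart's strategy'' and the Birnbaum--Tingey identity without carrying out the estimate. Everything before that point is routine; the sharp constant $2$ in the exponent is won only in that step, and you yourself flag it as the main obstacle, so as written the proposal establishes nothing beyond what a union bound over jump points plus Hoeffding would give (which, as you correctly note, loses a polynomial-in-$N$ factor). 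Two smaller points. First, the reduction to the uniform case yields $d_{\mathrm{KS}}(\widehat F_{\vec V},F)\le d_{\mathrm{KS}}(\widehat F_{\vec U},G)$ rather than equality when $F$ is not continuous and strictly increasing, but the inequality is the direction you need. Second, Massart's one-sided inequality with leading constant $1$ is valid only under the side condition $e^{-2N\epsilon^2}\le 1/2$, so your symmetrization step should either carry that condition along or observe that the two-sided claim is trivial whenever $2e^{-2N\epsilon^2}\ge 1$, which closes the loophole for all $\epsilon>0$.
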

Although the DKW inequality places an upper bound on the probability of $\widehat F_{\vec Z}$ escaping the $\epsilon$ tube centered at $\Phi$, rescaling $\vec Z$ by $\lambda$ causes the empirical CDF $\widehat F_{\lambda \vec Z}$ to stretch or compress relative to $\widehat F_{\vec Z}$, so the rescaled CDF $\widehat F_{\lambda \vec Z}$ may escape that tube if $\lambda$ is too far from $1$.  To address this, we restrict attention to scale factors $\lambda$ lying in a small interval $[1-t, 1+t]$, and we ``inflate'' the tube by adding a small quantity $\gamma(t)$ to $\epsilon$.  These two steps ensure that whenever $\widehat F_{\vec Z}$ is contained in the original $\epsilon$ tube, $\widehat F_{\lambda\vec Z}$ will still be contained in the inflated $\epsilon + \gamma(t)$ tube.  To formalize this idea, we define deformed Gaussian CDFs $\Phi^+$ and $\Phi^-$ which inflate the bottom and top of the tube respectively.  The function $\gamma$ is then defined as the largest distance between the original tube and the inflated tube.
  \begin{definition}\label{def:deform}
  	Let $\Phi$ be the standard Gaussian CDF, and let $\sgn$ be the sign function.  For each $t\in[0,1)$, we define
  \begin{align}
	\Phi^\pm_{t}(x) = \Phi\left(\dfrac{x}{1 \mp\sgn(x) t}\right), \qquad
	\gamma(t) = \sup_{x\in\reals}\left(\Phi^+_t(x) - \Phi(x)\right)
\end{align}
  \end{definition}
The approach of restricting $\lambda$ to the interval $[1-t, 1+t]$ costs us a penalty relative to the DKW bound \eqref{eq:DKW}, but because $|\vec Z|^2$ is a $\chi^2$ random variable, the following result of Laurent and Massart \cite[Section 4, Lemma 1]{LM2000} allows us to bound this penalty by a quantity exponentially small in $N$.
\begin{theorem*}[Laurent-Massart, 2000]\label{th:lmcs}
	If $U$ is a $\chi^2$ random variable with $N$ degrees of freedom, then for all $x\geq 0$,
\begin{align}
	\pr\left(U - N \geq 2\sqrt{Nx} + 2x\right)&\leq e^{-x} \label{lmlb}\\
	\pr\left(N -  U\geq 2\sqrt{Nx}\right)&\leq e^{-x} \label{lmub}
\end{align}
\end{theorem*}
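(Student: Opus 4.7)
The plan is to use the classical Cramér--Chernoff method together with the explicit moment generating function of the chi-squared distribution. Writing $U = \sum_{i=1}^N Z_i^2$ with $Z_i$ i.i.d.\ standard Gaussian, independence and the elementary identity $E[e^{\theta Z_1^2}] = (1-2\theta)^{-1/2}$ (valid for $\theta < 1/2$) give
\begin{equation*}
E\!\left[e^{\theta(U-N)}\right] = \exp\!\left(-N\theta - \tfrac{N}{2}\log(1-2\theta)\right), \qquad \theta < \tfrac12.
\end{equation*}
The task is then to bound this log-MGF by something clean enough that optimizing a Chernoff-type inequality produces the stated Bernstein-shaped tails $\sqrt{2vx} + cx$.

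For the upper tail \eqref{lmlb}, I would Taylor-expand $-\tfrac12\log(1-2\theta) = \theta + \theta^2 + \tfrac{4}{3}\theta^3 + \cdots$ and compare it, coefficient by coefficient, to the series $\theta^2/(1-2\theta) = \theta^2 + 2\theta^3 + 4\theta^4 + \cdots$. Since $2^k/(k+2) \leq 2^k$ for every $k \geq 0$, this yields the sub-gamma bound $-\theta - \tfrac12\log(1-2\theta) \leq \theta^2/(1-2\theta)$ on $(0,\tfrac12)$, hence $E[e^{\theta(U-N)}] \leq \exp(N\theta^2/(1-2\theta))$. The Chernoff inequality $\pr(U-N \geq t) \leq \exp(N\theta^2/(1-2\theta) - \theta t)$ with the choice $\theta = t/(2N+2t) \in (0,\tfrac12)$ makes the exponent equal to $-t^2/(4N+4t)$; setting $t = 2\sqrt{Nx}+2x$ then gives $-t^2/(4N+4t) = -x$ after a short algebraic verification, which is exactly \eqref{lmlb}.

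For the lower tail \eqref{lmub}, the analogous step is a sub-Gaussian bound on the MGF of $-(U-N)$. Here I would show $\theta - \tfrac12\log(1+2\theta) \leq \theta^2$ for $\theta > 0$ by elementary calculus: the difference $f(\theta) = \theta^2 - \theta + \tfrac12\log(1+2\theta)$ satisfies $f(0) = f'(0) = f''(0) = 0$ while $f'''(\theta) = 8/(1+2\theta)^3 > 0$, so $f \geq 0$. Thus $E[e^{-\theta(U-N)}] \leq e^{N\theta^2}$, and the optimal choice $\theta = t/(2N)$ yields $\pr(N-U \geq t) \leq e^{-t^2/(4N)}$; substituting $t = 2\sqrt{Nx}$ produces \eqref{lmub}.

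The main obstacle is the sub-gamma log-MGF inequality powering the upper tail: the sub-Gaussian bound of the lower tail falls out of a one-line convexity argument, but the upper bound requires the more delicate series comparison (or equivalently, a monotonicity argument after clearing denominators), and one also has to check that the Chernoff minimizer lands in the admissible range $\theta \in (0,\tfrac12)$. Once those two log-MGF bounds are in hand, the rest is essentially the standard Cramér transform reparametrized in Bernstein form.
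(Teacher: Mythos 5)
The paper does not prove this statement at all --- it quotes it verbatim from Laurent and Massart \cite{LM2000}, so you are supplying a proof where the paper supplies only a citation. Most of your argument is sound. The lower tail is fully correct: $\theta - \tfrac12\log(1+2\theta)\le\theta^2$ for $\theta>0$ follows from your derivative computation, and $\theta=t/(2N)$ with $t=2\sqrt{Nx}$ gives exponent exactly $-x$. Your sub-gamma log-MGF bound $-\theta-\tfrac12\log(1-2\theta)\le\theta^2/(1-2\theta)$ on $(0,\tfrac12)$ is also correct (the coefficient comparison you actually need is $2^{k+1}/(k+2)\le 2^{k}$, i.e.\ $2/(k+2)\le 1$ for $k\ge0$, not quite the inequality you wrote, but it holds).

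The genuine gap is the final algebraic step of the upper tail. With $\theta=t/(2N+2t)$ you do get the exponent $-t^2/(4N+4t)$, but substituting $t=2\sqrt{Nx}+2x$ yields
\[
\frac{t^2}{4(N+t)}=\frac{4x\left(N+2\sqrt{Nx}+x\right)}{4\left(N+2\sqrt{Nx}+2x\right)}=x\cdot\frac{N+2\sqrt{Nx}+x}{N+2\sqrt{Nx}+2x}<x \quad\text{for } x>0,
\]
so $\exp\left(-t^2/(4N+4t)\right)>e^{-x}$: the claimed identity ``$-t^2/(4N+4t)=-x$'' is false, and this route only delivers $\pr\left(U-N\ge 2\sqrt{Nx}+4x\right)\le e^{-x}$ (solve $t^2=4x(N+t)$ for $t$), which is weaker than \eqref{lmlb}. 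The problem is that $\theta=t/(2N+2t)$ is not the minimizer of $N\theta^2/(1-2\theta)-\theta t$. The fix is easy: take the true minimizer $\theta^*=\tfrac12\bigl(1-\sqrt{N/(N+2t)}\bigr)=\sqrt{Nx}/\bigl(N+2\sqrt{Nx}\bigr)\in(0,\tfrac12)$, for which $1-2\theta^*=\sqrt N/(\sqrt N+2\sqrt x)$ and a direct computation gives $N\theta^{*2}/(1-2\theta^*)-\theta^* t=-x$ exactly; equivalently, invoke the standard Legendre-transform inversion for sub-gamma variables, namely that $\log E\left[e^{\theta X}\right]\le v\theta^2/\bigl(2(1-c\theta)\bigr)$ implies $\pr\left(X\ge\sqrt{2vx}+cx\right)\le e^{-x}$, applied with $v=2N$ and $c=2$.
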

In the next section, we prove lemmas that formalize the steps in this sketch, and these lemmas enable short proofs of the main results in the last section.

\section{Lemmas}\label{sec:lemmas}

The lemma formalizing the tube inflation idea of the last section benefits from the observation that although $\gamma$ is defined as the largest difference between $\Phi^+$ and $\Phi$, symmetry allows us to also express it as the largest difference between $\Phi$ and $\Phi^-$.
\begin{lemma}\label{lem:gamalt}
	The function $\gamma$ of Definition \ref{def:deform} can be re-expressed as follows:
	\begin{align}\label{gammdef}
		\gamma(t) = \sup_{x\in\reals}\left(\Phi(x) - \Phi_t^-(x)\right)
	\end{align}
\end{lemma}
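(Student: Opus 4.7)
The plan is to exploit the reflection symmetry $\Phi(-x) = 1 - \Phi(x)$ of the standard Gaussian CDF together with the oddness of $\sgn$, to show that the function $x \mapsto \Phi(x) - \Phi_t^-(x)$ is simply the function $x \mapsto \Phi_t^+(x) - \Phi(x)$ reflected through the origin. Once this pointwise identity is established, taking suprema on both sides will give \eqref{gammdef}.

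First I would write down, for any $x \in \reals$, the identity $\Phi_t^+(-x) = 1 - \Phi_t^-(x)$. This follows from a short direct calculation: since $\sgn(-x) = -\sgn(x)$, we have
\begin{align*}
\Phi_t^+(-x) = \Phi\!\left(\frac{-x}{1 - \sgn(-x)\,t}\right) = \Phi\!\left(\frac{-x}{1 + \sgn(x)\,t}\right) = 1 - \Phi\!\left(\frac{x}{1 + \sgn(x)\,t}\right) = 1 - \Phi_t^-(x),
\end{align*}
where in the third equality I used $\Phi(-y) = 1 - \Phi(y)$. The case $x = 0$ is trivial since $\Phi_t^+(0) = \Phi_t^-(0) = \Phi(0) = 1/2$, so no delicate treatment of $\sgn(0)$ is needed.

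Next, combining this with $\Phi(-x) = 1 - \Phi(x)$ gives
\begin{align*}
\Phi_t^+(-x) - \Phi(-x) = \bigl(1 - \Phi_t^-(x)\bigr) - \bigl(1 - \Phi(x)\bigr) = \Phi(x) - \Phi_t^-(x).
\end{align*}
Since the map $x \mapsto -x$ is a bijection of $\reals$, taking the supremum over $x$ on both sides of this equality yields
\begin{align*}
\gamma(t) = \sup_{x\in\reals}\bigl(\Phi_t^+(x) - \Phi(x)\bigr) = \sup_{x\in\reals}\bigl(\Phi_t^+(-x) - \Phi(-x)\bigr) = \sup_{x\in\reals}\bigl(\Phi(x) - \Phi_t^-(x)\bigr),
\end{align*}
which is \eqref{gammdef}.

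There is no real obstacle here; the proof is essentially a one-line symmetry argument, and the only thing to watch is that the definition $\Phi_t^\pm(x) = \Phi(x/(1 \mp \sgn(x)\,t))$ behaves correctly under $x \mapsto -x$, which is exactly what the identity $\sgn(-x) = -\sgn(x)$ guarantees (with the origin handled trivially).
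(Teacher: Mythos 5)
Your proof is correct and follows essentially the same symmetry argument as the paper: both establish the pointwise identity $\Phi_t^+(-x) - \Phi(-x) = \Phi(x) - \Phi_t^-(x)$ via $\Phi(-y) = 1 - \Phi(y)$ and then take suprema over the reflection $x \mapsto -x$. The only cosmetic difference is that you use $\sgn(-x) = -\sgn(x)$ directly where the paper splits into the cases $x \geq 0$ and $x \leq 0$.
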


\begin{proof}
	Since $\Phi(x) - 1/2$ is odd, $\Phi(x) = 1 -\Phi(-x)$. Thus by Definition \ref{def:deform}, for all $x\geq 0$,
\begin{align}
	\Phi_t^+(x) - \Phi(x)
	&= \Phi\left(\frac{x}{1-t}\right) - \Phi(x)
	= \Phi(-x) - \Phi\left(\frac{-x}{1-t}\right)
	= \Phi(-x) - \Phi_t^-(-x) 
\end{align}
while for all $x\leq 0$
\begin{align}
	\Phi_t^+(x) - \Phi(x)
	&= \Phi\left(\frac{x}{1+t}\right) - \Phi(x) 
	= \Phi(-x) - \Phi\left(\frac{-x}{1+t}\right)
	= \Phi(-x) - \Phi_t^-(-x)
\end{align}
Putting these observations together implies that for all $x\in\reals$
\begin{align}
	\Phi^+_t(x) - \Phi(x) &= \Phi(-x) - \Phi^-_t(-x)
\end{align}
and thus
\begin{align}
	\{\Phi^+_t(x) - \Phi(x)\,|\,x\in\reals\} = \{\Phi(x) - \Phi^-_t(x)\,|\, x\in\reals\}
\end{align}
Taking the supremum of both sides and invoking Definition \ref{def:deform} gives \eqref{gammdef}.
\end{proof}

\begin{lemma}[Tube Inflation]\label{lem:inflate}
	 Let $F:\reals\to [0,1]$ be a CDF, and let $\lambda > 0$, $\epsilon > 0$, and $t\in[0,1)$.  Define the rescaled CDF $F_\lambda(x) = F(x/\lambda)$.  If
\begin{align}
	d_\mathrm{KS}(F, \Phi)\leq \epsilon\quad\text{and}\quad |1-\lambda| \leq t
\end{align}
then
\begin{align}
	d_\mathrm{KS}(F_\lambda, \Phi) \leq \epsilon + \gamma(t)
\end{align}
\end{lemma}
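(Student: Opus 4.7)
The plan is to bound $|F_\lambda(x)-\Phi(x)|$ pointwise in $x$ by $\epsilon+\gamma(t)$ and then take the supremum. The natural first move is the triangle inequality
\begin{align}
|F_\lambda(x)-\Phi(x)| = |F(x/\lambda)-\Phi(x)| \le |F(x/\lambda)-\Phi(x/\lambda)| + |\Phi(x/\lambda)-\Phi(x)|.
\end{align}
The first term is controlled by the hypothesis $d_\mathrm{KS}(F,\Phi)\le\epsilon$, uniformly in $x$. So the entire task reduces to showing that $|\Phi(x/\lambda)-\Phi(x)|\le\gamma(t)$ whenever $|1-\lambda|\le t$.

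To prove this, I would do a short case analysis on the signs of $x$ and $1-\lambda$, using monotonicity of $\Phi$ to sandwich $\Phi(x/\lambda)$ between $\Phi(x)$ and one of $\Phi_t^\pm(x)$. For instance, if $x\ge 0$ and $\lambda\in[1,1+t]$, then $x/(1+t)\le x/\lambda\le x$, so $\Phi_t^-(x)=\Phi(x/(1+t))\le\Phi(x/\lambda)\le\Phi(x)$, giving $|\Phi(x/\lambda)-\Phi(x)|\le\Phi(x)-\Phi_t^-(x)$. If $x\ge 0$ and $\lambda\in[1-t,1]$, then $x\le x/\lambda\le x/(1-t)$, so $\Phi(x)\le\Phi(x/\lambda)\le\Phi_t^+(x)$, giving $|\Phi(x/\lambda)-\Phi(x)|\le\Phi_t^+(x)-\Phi(x)$. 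The negative-$x$ cases are analogous: the sign flips in $\sgn(x)$ built into Definition \ref{def:deform} precisely mirror the fact that dividing a negative $x$ by $\lambda>1$ pushes $x/\lambda$ toward the origin rather than away from it, which is exactly why the deformation in the definition uses $1\mp\sgn(x)t$ rather than a constant factor.

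In each of the four sign cases the resulting upper bound is either $\Phi_t^+(x)-\Phi(x)$ or $\Phi(x)-\Phi_t^-(x)$, and both are bounded by $\gamma(t)$: the first directly by Definition \ref{def:deform}, the second by Lemma \ref{lem:gamalt}. Combining with the DKW term gives $|F_\lambda(x)-\Phi(x)|\le\epsilon+\gamma(t)$ for every $x\in\reals$, and taking the supremum over $x$ yields the claim.

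The only real obstacle is bookkeeping: one must carefully verify in each of the four sign cases that the resulting inequality points the correct way so that the deformation $\Phi_t^\pm$ shows up on the appropriate side. There is no nontrivial analytic content beyond monotonicity of $\Phi$ and the symmetry captured by Lemma \ref{lem:gamalt}, and in particular no need to evaluate or estimate $\gamma(t)$ itself — that work is deferred to the later lemma providing the explicit bound $\gamma(t)\le t/2$ used in Corollary \ref{cor:simpconc}.
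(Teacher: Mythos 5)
Your proposal is correct and follows essentially the same route as the paper: both arguments reduce the claim to showing $|\Phi(x/\lambda)-\Phi(x)|\le\gamma(t)$ via monotonicity of $\Phi$ (sandwiching $\Phi(x/\lambda)$ by the deformed CDFs $\Phi_t^\pm$) and then invoke Definition \ref{def:deform} together with Lemma \ref{lem:gamalt} to bound both $\Phi_t^+-\Phi$ and $\Phi-\Phi_t^-$ by $\gamma(t)$, before adding the $\epsilon$ from the KS hypothesis. The only cosmetic difference is that you write an explicit triangle inequality with a four-way sign case analysis, whereas the paper chains two-sided inequalities with cases only on the sign of $x$.
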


\begin{proof}
Let $|1-\lambda| \leq t$, then if $x\leq 0$, monotonicity of $\Phi$ gives
\begin{align}
	\Phi\left(\frac{x}{1 - t}\right) \leq \Phi\left(\frac{x}{\lambda}\right)\leq  \Phi\left(\frac{x}{1 + t}\right)
\end{align}
while if  $x\geq 0$
\begin{align}
	\Phi\left(\frac{x}{1 + t}\right) \leq \Phi\left(\frac{x}{\lambda}\right)\leq  \Phi\left(\frac{x}{1 - t}\right)
\end{align}
Combining with Definition \ref{def:deform} for $\Phi^\pm$ gives the following for all $x\in\reals$, 
\begin{align}\label{phpm}
	\Phi_t^-(x) \leq \Phi(x/\lambda) \leq \Phi_t^+(x)
\end{align}
But the expressions for $\gamma$ in Definition \ref{def:deform} and equation \eqref{gammdef} imply that for all $x\in\reals$,
\begin{align}\label{gac}
	\Phi(x) - \gamma(t) \leq \Phi_t^-(x), \qquad \Phi_t^+(x) \leq \Phi(x) + \gamma(t) 
	\end{align}
Chaining the inequalities \eqref{phpm} and \eqref{gac} gives
\begin{align}\label{pht}
	\Phi(x) - \gamma(t) \leq \Phi(x/\lambda) \leq \Phi(x) + \gamma(t)
\end{align}
The assumption $d_\mathrm{KS}(F, \Phi)\leq \epsilon$ implies that $\Phi(x) - \epsilon \leq F(x) \leq \Phi(x) + \epsilon$ for all $x\in\reals$, so substituting $x/\lambda$ for $x$, and invoking the definition $F_\lambda(x) = F(x/\lambda)$ gives
	\begin{align}\label{sins}
		\Phi(x/\lambda) - \epsilon \leq F_\lambda(x) \leq \Phi(x/\lambda) + \epsilon
	\end{align}
Combining the inequalities \eqref{pht} and \eqref{sins}, we find that for all $x\in\reals$
\begin{align}\label{sin}
		\Phi(x) - (\epsilon + \gamma(t)) \leq F_\lambda(x) \leq \Phi(x) + (\epsilon + \gamma(t))
	\end{align}
	and therefore $d_\mathrm{KS}(F_\lambda, \Phi) \leq \epsilon + \gamma(t)$.
\end{proof}

The following lemma bounds concentration of the scale factor $\lambda$ around $1$, and it allows us to calculate the penalty we pay by restricting $\lambda$ to lie in the interval $[1-t, 1+t]$.

\begin{lemma}[Concentration of $\lambda$]\label{lem:lamcon}
	Let $\vec Z = (Z_1, \dots, Z_N)$ be a random vector whose components are i.i.d. standard Gaussian random variables, and let $\lambda = \sqrt N/|\vec Z|$.  If we define
	\begin{align}\label{gpm}
		g_+(t) = \frac{1}{2}\left(1-\frac{1}{(1+t)^2}\right), \qquad
		g_-(t) = \frac{1}{2}\left(\sqrt{\frac{2}{(1-t)^2}-1}-1\right)
	\end{align}
	then for all $t\in[0,1)$,
	\begin{align}
		\pr\left(|1-\lambda| > t\right) \leq e^{-Ng_+(t)^2} + e^{-Ng_-(t)^2}
	\end{align}

\end{lemma}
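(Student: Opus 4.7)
My plan is to reduce the event $\{|1-\lambda| > t\}$ to two tail events of the chi-squared variable $U := |\vec Z|^2$, and then apply the Laurent--Massart bounds \eqref{lmlb} and \eqref{lmub} quoted above. Since $\lambda = \sqrt N/|\vec Z|$ is monotone decreasing in $U$, the event $\{\lambda > 1+t\}$ coincides with $\{U < N/(1+t)^2\}$, which I would rewrite as $\{N - U > 2Ng_+(t)\}$; likewise $\{\lambda < 1-t\}$ coincides with $\{U - N > N(1/(1-t)^2 - 1)\}$. A union bound then reduces the lemma to bounding these two $\chi^2$ tails separately.

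For the lower tail $\{N - U > 2Ng_+(t)\}$, I would apply \eqref{lmub} with $x = Ng_+(t)^2$ so that $2\sqrt{Nx} = 2Ng_+(t)$ matches the deviation exactly, yielding $\pr(\lambda > 1+t) \leq e^{-Ng_+(t)^2}$. For the upper tail I would apply \eqref{lmlb} with the guess $x = Ng_-(t)^2$; the required matching condition $2\sqrt{Nx} + 2x = N(1/(1-t)^2 - 1)$ then reads $2y(1+y) = 1/(1-t)^2 - 1$ with $y = g_-(t)$, a quadratic in $y$ whose positive root is precisely the formula for $g_-$ in \eqref{gpm}. A one-line algebraic check (expanding $(\sqrt{2/(1-t)^2-1}\,)^2$) confirms the match and gives $\pr(\lambda < 1-t) \leq e^{-Ng_-(t)^2}$. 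Summing the two tail bounds completes the proof.

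The only real subtlety is this second matching step. Because \eqref{lmlb} carries the mixed quantity $2\sqrt{Nx} + 2x$, the parameter $x$ cannot be taken as a simple linear expression in $t$; the somewhat unusual square-root form of $g_-$ in \eqref{gpm} is essentially reverse-engineered so that the matching condition collapses to a clean quadratic with the desired positive root. Once one spots this, the rest of the argument is mechanical.
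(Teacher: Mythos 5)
Your proposal is correct and follows essentially the same route as the paper: both reduce $\{|1-\lambda|>t\}$ to the two $\chi^2$ tail events $\{U < N/(1+t)^2\}$ and $\{U > N/(1-t)^2\}$ and apply the Laurent--Massart bounds, the only difference being that you plug in $x = Ng_\pm(t)^2$ and verify the matching condition directly, whereas the paper first inverts the Laurent--Massart inequalities in general form and then substitutes $y = N/(1\pm t)^2$ --- the same algebra run in the opposite direction. Your quadratic check $2y(1+y) = 1/(1-t)^2 - 1$ with $y = g_-(t)$ does indeed hold, so the argument is complete.
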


\begin{proof}\label{lamconproof}
	Since the components of $\vec Z$ are independent standard Gaussian random variables, $|\vec Z|^2$ is a $\chi^2$ random variable with $N$ degrees of freedom, so with a little algebra, Theorem \ref{th:lmcs} implies that
	\begin{align}
		\pr\left(|\vec Z|^2> y\right) &\leq \exp\left[-\frac{N}{4}\left(\sqrt{1-2\left(1-\frac{y}{N}\right)} -1\right)^2\right], \qquad y\geq N \label{lbrw}	\\
		\pr\left(|\vec Z|^2 < y\right) &\leq \exp\left[-\frac{N}{4}\left(\frac{y}{N}-1\right)^2\right], \qquad y\leq N \label{ubrw}	
		\end{align}
	We connect these inequalities to concentration of $\lambda$ by noting that
	\begin{align}
		\pr\left(\lambda > 1+t\right)
		&= \pr\left(\frac{\sqrt N}{|\vec Z|} > 1+t\right)
		= \pr\left(\frac{\sqrt N}{1+t} > |\vec Z|\right)
		= \pr\left(|\vec Z|^2 < \frac{N}{(1+t)^2}\right)\\
		\pr\left(\lambda <1-t\right) 
		&= \pr\left(\frac{\sqrt N}{|\vec Z|} < 1-t\right)
		= \pr\left(\frac{\sqrt N}{1-t} < |\vec Z|\right)
		= \pr\left(|\vec Z|^2 > \frac{N}{(1-t)^2}\right)
	\end{align}
	while for every $t\in[0,1)$,
	\begin{align}
		\pr\left(|1-\lambda| > t\right)
		&= \pr\left(\lambda > 1+t\right) + \pr\left(\lambda < 1-t\right) \label{sie}
	\end{align}
	Combining all of these observations yields the following after a bit of algebra:
	\begin{align}
		\pr\left(|1-\lambda| > t\right) 
		&\leq \exp\left[-\frac{N}{4}\left(1-\frac{1}{(1+t)^2}\right)^2\right] + \exp\left[-\frac{N}{4}\left(\sqrt{\frac{2}{(1-t)^2}-1}-1\right)^2\right]
	\end{align}
	Finally invoking the definitions of $g_\pm$ in equation \eqref{gpm} gives the desired result.
\end{proof}

Lemmas \ref{lem:inflate} and \ref{lem:lamcon} are already sufficient for the proof of our main result Theorem \ref{th:main}, but not for its Corollary \ref{cor:simpconc}, so we prove one more lemma which gives bounds on $\gamma$ and $g_\pm$ and thus enables Corollary \ref{cor:simpconc} as a consequence of Theorem \ref{th:main}.

\begin{lemma}\label{lem:funcbounds}
	Let $\gamma$ be defined as in Definition \ref{def:deform}, and let $g_\pm$ be defined as in equation \eqref{gpm}, then for all $t\in[0,1)$,
	\begin{align}
		\gamma(t) \leq t/2, \qquad g_-(t) \geq t, \qquad g_+(t) \geq (3/8)t
	\end{align}
\end{lemma}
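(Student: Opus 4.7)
For $g_+(t)\geq (3/8)t$, I will rewrite $g_+(t) = t(2+t)/[2(1+t)^2]$ and combine with $-(3/8)t$ over the common denominator $8(1+t)^2$. Expanding the numerator gives $t[4(2+t) - 3(1+t)^2] = t(5 - 2t - 3t^2) = t(1-t)(5+3t)$, so $g_+(t) - (3/8)t = t(1-t)(5+3t)/[8(1+t)^2]$, a product of manifestly non-negative factors on $[0,1)$.

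For $g_-(t)\geq t$, the inequality is equivalent to $\sqrt{2/(1-t)^2 - 1} \geq 2t + 1$. Since both sides are positive on $[0,1)$, I will square and clear denominators to obtain $1 \geq (1-t)^2(2t^2 + 2t + 1)$. Expanding yields the equivalent polynomial statement $t^2(2t^2 - 2t - 1) \leq 0$. The quadratic $2t^2 - 2t - 1$ has roots $(1\pm\sqrt{3})/2$, both outside $[0,1)$, and equals $-1$ at $t=0$, so it remains negative throughout $[0,1)$, proving the inequality.

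The bound $\gamma(t)\leq t/2$ is the main obstacle and is where I expect to do real analytical work. My plan is first to simplify the supremum defining $\gamma$: applying the substitution $v = x/(1-t)$ to the $x\geq 0$ branch, and using the symmetry identity from Lemma~\ref{lem:gamalt} to collapse the $x\leq 0$ branch onto the same expression, rewrites $\gamma(t) = \sup_{v\geq 0}[\Phi(v) - \Phi((1-t)v)]$. The first-order condition $\phi(v_*) = (1-t)\phi((1-t)v_*)$ then locates the maximizer explicitly as $v_*^2 = 2\log(1/(1-t))/[t(2-t)]$, reducing the task to the scalar inequality $\Phi(v_*) - \Phi((1-t)v_*) \leq t/2$.

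Verifying this scalar inequality is the hard part. Simple single-shot bounds on $\int_{(1-t)v_*}^{v_*}\phi(u)\,du$ are individually inadequate: the mean-value upper bound $tv_*\phi((1-t)v_*)$ is sharp as $t\to 0$ but too weak as $t\to 1$, while the trivial tail bound $\bar\Phi((1-t)v_*)$ is the reverse. A workable strategy will likely split the range of $t$ into complementary subranges and apply a refined bound in each, or else use an integration-by-parts identity such as $\int_v^w \phi\,du = \phi(v)/v - \phi(w)/w - \int_v^w \phi(u)/u^2\,du$ together with the critical-point condition to extract the needed cancellation. Because $\gamma(t)\leq t/2$ is tight in the limit $t\to 1^-$ (both sides tending to $1/2$), the analysis near the right endpoint is the most delicate, and plausibly accounts for the longer computation that the introduction defers to the appendix.
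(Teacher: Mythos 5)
Your treatments of $g_\pm$ are correct and, in fact, more elementary than the paper's: the paper proves $g_+(t)\geq (3/8)t$ by showing $g_+$ is concave and lies above the secant through $(0,0)$ and the limit value $3/8$ at $t\to 1^-$, and proves $g_-(t)\geq t$ by convexity of $g_-$ plus the tangent of unit slope at the origin, whereas your direct factorizations $g_+(t)-(3/8)t = t(1-t)(5+3t)/[8(1+t)^2]$ and $t^2(2t^2-2t-1)\leq 0$ settle both claims by pure algebra. Two small cautions there: in the reduction of $g_-(t)\geq t$ you square the inequality $\sqrt{2/(1-t)^2-1}\geq 2t+1$, which is legitimate only because the right side is positive, as you note; and your identity for $g_-$ checks out ($1-(1-t)^2(2t^2+2t+1)=-t^2(2t^2-2t-1)$).

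The genuine gap is the bound $\gamma(t)\leq t/2$, which is the heart of the lemma and which your proposal does not prove. You reduce it to the scalar inequality $\Phi(v_*)-\Phi((1-t)v_*)\leq t/2$ at the critical point $v_*^2 = 2\ln(1/(1-t))/[t(2-t)]$ (this matches the paper's $x_+(t)/(1-t)$), but then you only list candidate strategies (``split the range of $t$'', ``use an integration-by-parts identity'') without executing any of them; as you yourself observe, the obvious one-shot estimates fail at one end or the other, and since the bound is tight only as $t\to 1^-$ (near $t=0$ the true slope is $1/\sqrt{2\pi e}$), a per-$t$ pointwise estimate is delicate and cannot be waved through. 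There is also a smaller gap in your reduction: the symmetry of Lemma \ref{lem:gamalt} maps the $x\leq 0$ branch of $\Phi_t^+-\Phi$ onto $\sup_{u\geq 0}\bigl[\Phi(u)-\Phi\bigl(u/(1+t)\bigr)\bigr]$, i.e.\ onto the same family but with effective parameter $t/(1+t)$, not $t$; to conclude $\gamma(t)=\sup_{v\geq 0}[\Phi(v)-\Phi((1-t)v)]$ you still need the (easy, but unstated) observation that this supremum is nondecreasing in the scale gap, so the parameter $t/(1+t)\leq t$ branch is dominated. The paper closes the hard step by an entirely different mechanism: it studies the critical value as a function of $t$ (your $t\mapsto \Phi(v_*(t))-\Phi((1-t)v_*(t))$ is exactly its $f_+$), proves via the appendix computation that $f_-''<0$ and hence, by the reflection $f_+(t)=-f_-(-t)$, that $f_+$ is convex, and then gets $f_+(t)\leq t/2$ in one stroke from the secant through $f_+(0)=0$ and $f_+(t)\to 1/2$ as $t\to 1^-$. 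If you want to complete your route, proving convexity of your critical-value function is the natural move, but that is essentially the paper's appendix computation; as it stands, the first and central inequality of the lemma remains unproven.
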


\begin{proof}

We start with the bound on $\gamma$.  For each $t\in[0,1)$ define $d_t = \Phi^+_t - \Phi$, then $\gamma(t) = \sup\{d_t(x)\,|\,x\in\reals\}$ by Definition \ref{def:deform}.  Since $d_0 = \Phi_0^+ - \Phi = 0$, we get $\gamma(0) = 0$, and the bound holds at $t = 0$.  For $t\in(0,1)$, we attempt to maximize $d_t$.  We differentiate it for $x\neq 0$ where it is smooth, and we invoke the identity $\Phi'(x) = e^{-x^2/2}/\sqrt{2\pi}$.  This gives
	\begin{align}
		d_t'(x) = 
        	\dfrac{e^{-x^2/2}}{\sqrt{2\pi}} \left(\dfrac{1}{1-\sgn(x)t}\exp\left[-\dfrac{x^2}{2}\left(\dfrac{1}{(1-\sgn(x)t)^2}-1\right)\right] -1\right), \qquad x\neq 0
	\end{align}
	For $t\in(0,1)$, the derivative vanishes at exactly one negative value of $x$ and at exactly one positive value of $x$, which we call $x_-(t)$ and $x_+(t)$ respectively;
	\begin{align}\label{xpmdef}
		x_-(t) = -\sqrt{\frac{2(1+t)^2}{2+t}\cdot \frac{\ln(1+t)}{t}}, \qquad
		x_+(t) = \sqrt{\frac{2(1-t)^2}{2-t}\cdot \frac{\ln(1-t)}{-t}}
	\end{align}
	A bit of computation reveals that $d_t'(x) > 0$ for $x<x_-(t)$ and $d_t'(x) < 0$ for $x_-(t) < x < 0$.  Thus $d_t$ attains a global maximum on $(-\infty, 0)$ at $x_-(t)$.  Similarly $d_t'(x) > 0$ for $0<x<x_+(t)$ while $d_t'(x) < 0$ for $x > x_+(t)$, so $d_t$ attains a global maximum on $(0, \infty)$ at $x_+(t)$.  Since $d_t(0) = 0$, but $d_t(x) > 0$ for $x\neq 0$, one of $x_-(t)$ or $x_+(t)$ (or both) is a global maximum of $d_t$ on $\reals$. Thus if for $t\in(0,1)$ we define $f_-(t) = d_t(x_-(t))$ and $f_+(t) = d_t(x_+(t))$, then
\begin{align}\label{gmax}
	\gamma(t) = \max\{f_-(t), f_+(t)\}
\end{align} 
To further analyze $\gamma$, it is convenient to replace $\ln(1+t)$ with its Taylor series about zero, so the function $\ln(1+t)/t$ becomes smooth on $(-1,1)$ after dividing through by $t$.  This implies that $x_-$ and $x_+$, and by extension $f_-$ and $f_+$, can be regarded as smooth functions on $(-1,1)$.  Recalling Definition \ref{def:deform} and noting that $x_-(t) < 0$ while $x_+(t) > 0$ gives 
\begin{align}\label{fhdefs}
	f_-(t) = \Phi\left(\frac{x_-(t)}{1+t}\right) - \Phi(x_-(t)), \qquad f_+(t) = \Phi\left(\frac{x_+(t)}{1-t}\right) - \Phi(x_+(t))
\end{align}
The computations in Appendix \ref{fmda} show that $f_-'' < 0$, so $f_-$ is concave on $(-1,1)$.  On the other hand, the definitions \eqref{fhdefs} show that $f_+(t) = -f_-(-t)$, giving $f_+'(t) = f_-'(-t)$, and thus $f_+''(t) = -f_-''(-t) > 0$, so $f_+$ is convex.  We recall that a convex (resp. concave) function lies above (resp. below) its tangents.  A short computation shows that $f_+'(0) = f_-'(0) = 1/\sqrt{2e\pi}$, so the functions $f_-$ and $f_+$ share a tangent at the origin, and by convexity, $f_+$ is above that tangent while by concavity, $f_-$ is below it, so $f_+\geq f_-$ on $(-1,1)$.  It follows from $\eqref{gmax}$ that $\gamma = f_+$.  On the other hand, $f_+(t) \to 1/2$ as $t\to 1^-$, so since $f_+(0) = 0$, the line with slope $1/2$ passing through the origin is a secant line of $f_+$, and since $f_+$ is convex, it lies below that secant, which is the desired bound on $\gamma$.

We turn to the bounds on $g_\pm$.  Using definition \eqref{gpm}, $g_-$ is a smooth function on  the interval $(1-\sqrt{2}, 1)$, which contains $0$.   Taking derivatives of $g_-$ gives
	\begin{align}
		g_-'(t) = \frac{1}{(1-t)^2 \sqrt{1+2t-t^2}}, \qquad g_-''(t) = \frac{1 + 6t - 3t^2}{(1-t)^3 \left(1 + 2 t - t^2\right)^{3/2}}
	\end{align}
	It follows that $g_-'(0) = 1$, and inspecting the numerator of $g_-''$, we find that $g_-'' > 0$ for $t\in(1-\sqrt{1+1/3}, 1)$, so $g_-$ is convex on an open interval containing $0$.  Hence on that interval, $g_-$ is bounded below by its tangent at the origin which is the line of unit slope.  In particular $g_-(t)\geq t$ for all $t\in [0,1)$, which is the desired bound on $g_-$.
	Using \eqref{gpm}, we can regard $g_+$ as a smooth function on the interval $(-1, 1)$.  Taking derivatives yields
	\begin{align}
	g_+'(t) = \frac{1}{(1+t)^3}, \qquad g_+''(t) = \frac{-3}{(1+t)^4}
\end{align}
	so $g_+''<0$, and thus $g_+$ is concave on $(-1,1)$.  Since $g_+(0) = 0$ and $g_+(1)\to 3/8$ as $t\to 1^-$, the line passing through the origin with slope $3/8$ is a secant line of $g_+$, and since it is concave, it is bounded below by this line, which is the desired bound on $g_+$
	\end{proof}
	
\begin{remark}\label{rem:constants}
	Since $\gamma = f_+$ is convex on $[0,1)$ and $\gamma'(0) = 1/\sqrt{2\pi e}$, one can choose secants with slopes from $1/2$ down to $1/\sqrt{2\pi e}$ to obtain a tighter bound on $\gamma(t)$ at the cost of restricting the bound to hold on a sub-interval of $[0,1)$. One can thus get as close as desired to $\gamma(t) \leq t/\sqrt{2\pi e}$, and this improves the $t/2$ on the left hand side of inequality \eqref{explicitbound} as close as one wants to $t/\sqrt{2\pi e}$.  Since $g_+$ is concave on $[0,1)$ and $g_+'(0) = 1$, one can choose secants with slopes between $3/8$ and $1$ to get as close as desired to the bound $g_+(t) \geq t$, and this improves the constant $9/64$ as close as desired to $1$ on the right hand side of inequality \eqref{explicitbound}, but again only if one is willing to tolerate a result that holds on a subinterval of $[0,1)$.  The constant in exponent of the last term on the right of \eqref{explicitbound} cannot be improved in the same way.  That constant comes from the lower bound $g_-(t) \geq t$, but $g_-$ is convex on $[0,1)$ while $g_-'(0) = 1$, so $g_-$ is not bounded below by any line passing through the origin with slope greater than $1$.
\end{remark}

\section{Proofs of Theorem \ref{th:main} and Corollary \ref{cor:simpconc}}\label{sec:mainproofs}

\begin{proof}[Proof of Theorem \ref{th:main}]
	The definition \eqref{ecdfdef} of the empirical CDF implies that $\widehat F_{\lambda \vec Z}(x) = \widehat F_{\vec Z}(x/\lambda)$.  Combining this with (the contrapositive of) Lemma \ref{lem:inflate} gives
\begin{align}\label{inclusion}
	\left\{d_\mathrm{KS}(\widehat F_{\lambda \vec Z}, \Phi) > \epsilon + \gamma(t)\right\} \subseteq \left\{d_\mathrm{KS}(\widehat F_{\vec Z}, \Phi)>\epsilon\right\} \cup \Big\{|1-\lambda| > t\Big\}
\end{align}
Using \eqref{coreq} ($\lambda \vec Z = \sqrt{N} \vec X$) and the union bound gives
\begin{align}
		\pr\left(d_\mathrm{KS}\left(\widehat F_{\sqrt{N}\vec X}, \Phi\right)>\epsilon + \gamma(t)\right) 
		&\leq \pr\left(d_\mathrm{KS}(\widehat F_{\vec Z}, \Phi)>\epsilon\right) + \pr\left(|1-\lambda| > t \right)
\end{align}
The DKW inequality \eqref{eq:DKW} bounds the first term on the right, Lemma \ref{lem:lamcon} bounds the second term, and together they yield the desired bound.
\end{proof}

\begin{proof}[Proof of Corollary \ref{cor:simpconc}]
Use the bound on $\gamma$ in Lemma \ref{lem:funcbounds}, then invoke Theorem \ref{th:main}, and finally use the bounds on $g_\pm$ in Lemma \ref{lem:funcbounds} to obtain the following sequence of inequalities.
	\begin{align}
		\pr\left(d_\mathrm{KS}\left(\widehat  F_{\sqrt{N}\vec X}, \Phi\right)> \epsilon + \frac{t}{2}\right) 
		&\leq \pr\left(d_\mathrm{KS}\left(\widehat F_{\sqrt{N}\vec X}, \Phi\right)> \epsilon + \gamma(t)\right)\\
		& \leq 2e^{-2N\epsilon^2} + e^{-Ng_+(t)^2} + e^{-Ng_-(t)^2} \\
		& \leq 2e^{-2N\epsilon^2} + e^{-\frac{9}{64}Nt^2} + e^{-Nt^2}
	\end{align}

\end{proof}

\appendix

\section{Derivatives of $f_-$ and their signs}\label{fmda}

As noted in the proof of Lemma \ref{lem:funcbounds}, we compute the first and second derivatives of $f_-$, and we show that $f_-''<0$ on $(-1,1)$.  It helps to define an auxiliary function
\begin{align}
	\alpha(t) = \frac{1}{2+t} \frac{\ln(1+t)}{t}\label{aldef}
\end{align}
Define $\alpha(0) = 1/2$, and note $\alpha$ extends smoothly to $(-1,1)$ via the Taylor series of $\ln(1+t)$ about zero.  This allows us to regard all functions of $t$ in this appendix as smooth on $(-1,1)$.

\subsection{Computation of $f_-'$ and $f_-''$}

Abbreviating $\alpha = \alpha(t)$, $\alpha' = \alpha'(t)$, recalling \eqref{fhdefs}, and using $\Phi'(x)=\phi(x) = e^{-x^2/2}/\sqrt{2\pi}$ gives
\begin{align}
	f_-'(t) 
	&= \frac{d}{dt}\left[\Phi\left(-\sqrt{2\alpha}\right) -\Phi\left(-(1+t)\sqrt{2\alpha}\right)\right] \\
	&= \phi\left(-\sqrt{2\alpha}\right)\cdot\left(-\frac{\alpha'}{\sqrt{2\alpha}}\right) -\phi\left(-(1+t)\sqrt{2\alpha}\right)\cdot\left(-\sqrt{2\alpha} - \frac{(1+t)\alpha'}{\sqrt{2\alpha}}\right) \label{pmdint}
\end{align}
but
\begin{align}
	\phi(-\sqrt{2\alpha}) 
	= \frac{e^{-\alpha}}{\sqrt{2\pi}} \label{alphA}
\end{align}
and
\begin{align}
	\phi\left(-(1+t)\sqrt{2\alpha}\right)
	&= \frac{1}{\sqrt{2\pi}}\exp\left[-(1+t)^2\alpha\right] \\
	&= \frac{1}{\sqrt{2\pi}}\exp\left[-\alpha\cdot t(2+t) - \alpha\right] \\
	&= \frac{1}{\sqrt{2\pi}}\exp\left[\ln\left(\frac{1}{1+t}\right) - \alpha\right] \\
	&= \frac{e^{-\alpha}}{\sqrt{2\pi}}\frac{1}{1+t} \label{alphB}
\end{align}
Combining \eqref{pmdint}, \eqref{alphA}, and \eqref{alphB} gives
\begin{align}
	f_-'(t)
	&=\frac{e^{-\alpha}}{\sqrt{2\pi}} \left(-\frac{\alpha'}{\sqrt{2\alpha}}\right) -\frac{e^{-\alpha}}{\sqrt{2\pi}}\frac{1}{1+t}\left(-\sqrt{2\alpha} - \frac{(1+t)\alpha'}{\sqrt{2\alpha}}\right)
	= \frac{e^{-\alpha}}{\sqrt{2\pi}}\frac{\sqrt{2\alpha}}{1+t} \label{fmd} 
\end{align}
Taking another derivative gives
\begin{align}
	f_-''(t)
	&=
	-\frac{e^{-\alpha}\sqrt{2\alpha}}{\sqrt{2\pi}}\frac{1}{(1+t)^2}
	+ \frac{-e^{-\alpha}\alpha'\sqrt{2\alpha} + e^{-\alpha}(\sqrt{2\alpha})^{-1}\alpha'}{\sqrt{2\pi}} \frac{1}{1+t} \\
	&= -\frac{f_-'(t)}{1+t}\left[1 - (1+t)\alpha'\left(\frac{1}{2\alpha}-1\right)\right]\label{fmdd}
\end{align}
We now analyze the terms in $f_-''$ to determine its sign.  Inspection of \eqref{aldef} reveals that $\alpha > 0$, so $1/(2\alpha) > 0$ and thus the factor $1/(2\alpha)-1>-1$ in the expression for $f_-''$ in \eqref{fmdd}.  On the other hand, we show in the subsections below that $0<-(1+t)\alpha'<1$, so the product $-(1+t)\alpha'(1/(2\alpha)-1)$ is greater than $-1$, and thus the term in brackets in the expression \eqref{fmdd} for $f_-''$ is positive on $(-1,1)$.  But inspection of \eqref{fmd} shows that $f_-'>0$, and $(1+t)$ is also positive.  Putting this all together, and referring to \eqref{fmdd}, we find that $f_-''<0$ as desired.

\subsection{Proof that $0<-(1+t)\alpha'$}

Referring again to \eqref{aldef}, we compute
\begin{align}
	-(1+t)\alpha' 
	&= -(1+t)\left[-\frac{1}{(2+t)^2}\frac{\ln(1+t)}{t} + \frac{1}{(2+t)}\frac{t(1+t)^{-1} - \ln(1+t)}{t^2}\right] \\
	&= \frac{1+t}{2+t}\alpha + \frac{(1+t)\ln(1+t)-t}{t^2(2+t)} \label{aint}
\end{align}
This expression evaluates to $1/2$ at $t=0$, so we need only consider $t\in(-1,1)$ with $t\neq 0$.  Since $\alpha > 0$, the first term is positive on $(-1,1)$.  Since $(1+t)\ln(1+t)$ is convex and is tangent to the line with unit slope intersecting the origin, we get $(1+t)\ln(1+t) - t \geq 0$, so the second term in \eqref{aint} is non-negative, and thus $0<-(1+t)\alpha'$ on $(-1,1)$.

\subsection{Proof that $-(1+t)\alpha'<1$}

Since \eqref{aint} evaluates to $1/2$ at $t=0$, we need only consider $t\in(-1,1)$ with $t\neq 0$.  Starting from \eqref{aint}, we re-write $-(1+t)\alpha'$ in the following way:
\begin{align}
	-(1+t)\alpha' 
	&= \frac{1+t}{2+t}\alpha + \frac{(1+t)\ln(1+t)-t}{t^2(2+t)} \\
	&= \frac{(1+t)\ln(1+t)}{(2+t)^2t}+ \frac{(1+t)\ln(1+t)-t}{t^2(2+t)} \\
	&= \frac{2(1+t)^2\ln(1+t)- t(2+t)}{t^2(2+t)^2} \\
	&= \frac{2(1+t)^2\ln(1+t)- t(2+t) - t^2(2+t)^2}{t^2(2+t)^2} + 1\\
	&= - \frac{(1+t)^2\left[t(2+t)-2\ln(1+t)\right]}{t^2(2+t)^2} + 1
\end{align}
But $t(2+t)$ is convex while $2\ln(1+t)$ is concave, and they share a tangent at the origin, so $t(2+t) > 2\ln(1+t)$ for $t\neq 0$.  Therefore the term in brackets is positive for nonzero $t\in(-1,1)$, and thus $-(1+t)\alpha' < 1$.






\begin{thebibliography}{99}

\bibitem{ADG2001}
G.~Ben~Arous, A.~Dembo, and A.~Guionnet, \emph{Aging of spherical spin
  glasses}, Probab. Theory Related Fields \textbf{120} (2001), no.~1, 1--67.
  \MR{1856194}

\bibitem{DF1987}
Persi Diaconis and David Freedman, \emph{A dozen de {F}inetti-style results in
  search of a theory}, Ann. Inst. H. Poincar\'e{} Probab. Statist. \textbf{23}
  (1987), no.~2, 397--423. \MR{898502}

\bibitem{DKW1956}
A.~Dvoretzky, J.~Kiefer, and J.~Wolfowitz, \emph{Asymptotic minimax character
  of the sample distribution function and of the classical multinomial
  estimator}, Ann. Math. Statist. \textbf{27} (1956), 642--669. \MR{83864}

\bibitem{KR2018}
Steven~S. Kim and Kavita Ramanan, \emph{A conditional limit theorem for
  high-dimensional {$\ell^p$}-spheres}, J. Appl. Probab. \textbf{55} (2018),
  no.~4, 1060--1077. \MR{3899928}

\bibitem{LM2000}
B.~Laurent and P.~Massart, \emph{Adaptive estimation of a quadratic functional
  by model selection}, Ann. Statist. \textbf{28} (2000), no.~5, 1302--1338.
  \MR{1805785}

\bibitem{Massart1990}
P.~Massart, \emph{The tight constant in the {D}voretzky-{K}iefer-{W}olfowitz
  inequality}, Ann. Probab. \textbf{18} (1990), no.~3, 1269--1283. \MR{1062069}

\bibitem{RR1991}
S.~T. Rachev and L.~R\"uschendorf, \emph{Approximate independence of
  distributions on spheres and their stability properties}, Ann. Probab.
  \textbf{19} (1991), no.~3, 1311--1337. \MR{1112418}

\bibitem{Stam1982}
A.~J. Stam, \emph{Limit theorems for uniform distributions on spheres in
  high-dimensional {E}uclidean spaces}, J. Appl. Probab. \textbf{19} (1982),
  no.~1, 221--228. \MR{644435}

\end{thebibliography}


\begin{acks}
I am grateful to Alec Stein for his infinite capacity to discuss ``the piston'' which led me to think about the ideas in this paper and to Julian Gold for his immensely helpful comments on the paper's structure. 
\end{acks}


\end{document}